\documentclass[11pt]{article}

\usepackage[margin=1in]{geometry}
\usepackage{amsmath,amssymb,amsfonts,amsthm,mathtools}
\usepackage{enumitem}
\usepackage{microtype}
\usepackage{float}
\usepackage{tikz}
\usepackage{placeins}
\usetikzlibrary{arrows.meta,calc,positioning}
\usepackage[hidelinks]{hyperref}
\hypersetup{
  pdftitle={Information and Locality in Cayley Graphs},
  pdfauthor={Ming-Hsuan Kang and Yu-Hsuan Hsieh},
  pdfsubject={Separating patterns and local windows in Cayley graphs},
  pdfkeywords={Cayley graph, separating pattern, de Bruijn sequence, carbon torus, graph identification}
}

\newcommand{\F}{\mathbb{F}}
\newcommand{\Z}{\mathbb{Z}}
\newcommand{\Cay}{\operatorname{Cay}}
\newcommand{\Span}{\operatorname{span}}
\newcommand{\GL}{\operatorname{GL}}
\newcommand{\sep}{\operatorname{sep}}
\newcommand{\csep}{\operatorname{csep}}
\newcommand{\Dih}{\operatorname{Dih}}
\newcommand{\End}{\operatorname{End}}
\newcommand{\ord}{\operatorname{ord}}

\newtheorem{theorem}{Theorem}[section]
\newtheorem{proposition}[theorem]{Proposition}
\newtheorem{computationalproposition}[theorem]{Computational Proposition}
\newtheorem{lemma}[theorem]{Lemma}

\theoremstyle{definition}
\newtheorem{definition}[theorem]{Definition}

\theoremstyle{remark}
\newtheorem{remark}[theorem]{Remark}
\newtheorem{question}[theorem]{Question}

\title{Information and Locality in Cayley Graphs}
\author{%
  Ming-Hsuan Kang\thanks{Corresponding author. Email: \texttt{kmsming@gmail.com}.}
  \and
  Yu-Hsuan Hsieh\thanks{Email: \texttt{benny01237@gmail.com}.}
  \\[4pt]
  \small Department of Applied Mathematics, National Yang Ming Chiao Tung
  University, Hsinchu, Taiwan
}
\date{}

\begin{document}
\maketitle

\begin{abstract}
A de Bruijn sequence is the cyclic prototype of a Cayley-graph observation
problem: when does the ordered label word on a translated window $gY$
determine the vertex $g$?  We distinguish three parameters.  The unrestricted
number $\sep_q(G)$ minimizes an arbitrary separating pattern; the connected
number $\csep_q(G,S)$ requires a connected Cayley window containing
$Y_S=\{1\}\cup S$; and the one-step number $\chi_1(G,S)$ fixes $Y_S$ and
minimizes the alphabet.  Thus $\sep_q$ is a group-level baseline,
$\csep_q$ measures the cost of locality, and $\chi_1$ tests the smallest
prescribed local window.

The organizing theme is the tension between information and locality.  Carbon
tori test the gap between $\sep_q$ and $\csep_q$: for generalized dihedral
groups $\F_{\ell^d}^{\times}\rtimes C_2$ we prove, for odd prime powers
$\ell$, the sharp baseline $\sep_\ell=d+1$ and construct connected zig-zag
windows, while the order-$14$
Heawood torus satisfies $\sep_4=2$ and $\csep_4=4$.  The spherical $A_5$
example and a finite simple-group comparison test the fixed one-step window:
explicit symmetric cubic generating tuples give $\chi_1(A_5,S)=3$ and
$\chi_1(\operatorname{PSL}_2(\F_7),S)=4$, both at the counting bound, with
structured matrix-coefficient certificates.  Cyclic-coset packings,
finite-field coordinates, and restricted matrix coefficients are used only
as the construction tools these two examples require.
\end{abstract}

\noindent\textbf{Keywords.}
Cayley graph; separating pattern; de Bruijn sequence; carbon torus; graph
identification.

\section{Introduction}

A de Bruijn sequence may be viewed as an optimal local coordinate system on a
cycle.  A cyclic word over a $q$-symbol alphabet can be chosen so that every
word of length $k$ occurs exactly once as a consecutive
block~\cite{debruijn,aardenne-debruijn}.
Thus an observer reading the same ordered interval
$(0,1,\ldots,k-1)$ at every position can determine its location on a cycle of
length $q^k$.  More broadly, this suggests a local self-location problem on a
symmetric network: vertices carry symbols from a small alphabet, and position
is inferred from the labels seen in a fixed translated configuration.  We use
this only as combinatorial motivation.  The essential point is that both the
visible symbols and their relative positions carry information.

We formulate this problem on a finite Cayley graph.  Let $G$ be a finite
group, let $[q]$ be an alphabet of $q$ symbols, let $f:G\to[q]$ be a
labelling, and let $Y=(y_1,\ldots,y_k)$ be an ordered pattern of distinct
elements of $G$.  At $g\in G$ the translated observation is
\[
    W_{f,Y}(g)=\bigl(f(gy_1),\ldots,f(gy_k)\bigr).
\]
The pair $(f,Y)$ is separating when these words are distinct for all $g$.
After an ordered generating tuple $S=(s_1,\ldots,s_r)$ has been chosen, the
right Cayley graph $\Cay(G,S)$ supplies the geometry: the generators are named
directions, and one may require the observation window to be connected.

Three parameters separate the resulting questions.  For a fixed alphabet,
$\sep_q(G)$ minimizes the size of an arbitrary separating pattern.  The graph
parameter $\csep_q(G,S)$ requires a connected Cayley window containing the
identity and the generators.  Finally, $\chi_1(G,S)$ fixes the one-step window
$Y_S=\{1\}\cup S$ and minimizes the alphabet size.  These parameters measure,
respectively, an information-theoretic group baseline, the cost of respecting
the Cayley geometry, and the information carried by the smallest prescribed
local neighborhood.  The last quantity remembers the named generator
directions, so it can depend on the chosen Cayley structure rather than only
on the underlying abstract graph.  The problem lies beside established
graph-identification models such as identifying codes and locally
identifying colorings, which use unordered detector or color information
rather than ordered translated words; Remark~\ref{rem:graph-identification}
makes this comparison precise.

The constructions are systematic as well as equivariant.  Cyclic-coset
packings produce path windows, and finite-field coordinates encode the
associated labellings by evaluating a single linear functional along a group
orbit.  For a fixed one-step window, matrix coefficients
$f(g)=\lambda(\rho(g)v)$ provide the analogous mechanism.  In both cases an
ordered window collects several translates of one scalar observation, and
separation becomes an injectivity statement.  Algebra therefore organizes
the construction of the local code; it is not the object of the problem.

Our main geometric setting is the family of carbon tori, periodic cubic
honeycomb quotients with generalized-dihedral Cayley models
$G_N=(\Z^2/N)\rtimes C_2$.  Products of their involutory generator directions
give translations, so cyclic path constructions lift to connected zig-zag
windows in the cubic graph.  Finite-field coordinates also give sharp
unrestricted baselines.  The smallest nondegenerate quotient already exhibits
the cost of locality: for the order-$14$ Heawood carbon torus,
\[
    \sep_4(G_H)=2,
    \qquad
    \csep_4(G_H,S_H)=4.
\]

The complementary one-step problem is tested on the truncated icosahedral
Cayley graph on $A_5$, a spherical realization of Buckminsterfullerene studied
in~\cite{kang-lin}, and on a nonplanar finite simple comparison from
$\operatorname{PSL}_2(\F_7)$.  Exact certificates give one-step alphabet
numbers $3$ and $4$, respectively, both attaining the counting bound.  These
examples test the information carried by a prescribed local neighborhood,
whereas the carbon tori test the geometry of connected windows.

The emphasis is on the distinction among unrestricted, connected, and
one-step windows, together with construction principles that can be
transferred to other Cayley graphs.  Sections~\ref{sec:cyclic}
and~\ref{sec:toolbox} develop the construction tools,
Sections~\ref{sec:spherical} and~\ref{sec:carbon} treat the two graph families,
and Section~\ref{sec:questions} records the remaining questions.
Table~\ref{tab:exact-values} summarizes the exact values proved below.

\begin{table}[H]
\centering
\small
\renewcommand{\arraystretch}{1.15}
\begin{tabular}{c|c|c}
Graph or group & Invariant & Exact value \\
\hline
$A_5$, $S=(a,b,b^{-1})$ & $\chi_1(G,S)$ & $3$ \\
$\operatorname{PSL}_2(\F_7)$, $S=(a,b,b^{-1})$ & $\chi_1(G,S)$ & $4$ \\
$C_7\rtimes C_2$ (Heawood torus) & $\sep_4(G)$ & $2$ \\
$C_7\rtimes C_2$ (Heawood torus) & $\csep_4(G,S)$ & $4$ \\
\hline
$\F_{q^n}^{\times}$, $q^n>2$ (cyclic benchmark) & $\sep_q(G)$ & $n$ \\
$\F_{\ell^d}^{\times}\rtimes C_2$, $\ell$ odd (group baseline)
  & $\sep_\ell(G)$ & $d+1$
\end{tabular}
\caption{Exact values established in the main examples, followed by supporting
and comparison benchmarks.}
\label{tab:exact-values}
\end{table}

\section{Separating windows in Cayley graphs}

Throughout, the groups on which our parameters are evaluated and their
quotient graphs are finite.  The identity of a group is denoted by $1$.  If
$S=(s_1,\ldots,s_r)$ is an ordered generating tuple of distinct nonidentity
elements, then $\Cay(G,S)$ denotes the right Cayley graph: its vertices are
the elements of $G$, and the $s_i$-edge from $g$ leads to $gs_i$.  We regard
this graph as undirected when the underlying generating set is inverse-closed;
in set operations, a generating tuple means its underlying set.  We write
$C_n$ for the cyclic group of order $n$, $D_n$ for the dihedral group of order
$2n$, $A_n$ for the alternating group, and $K_n$ for the complete graph.  The
symbol $\square$ denotes the Cartesian product of graphs, and $\rtimes$
denotes a semidirect product.  For an integer $q\geq2$, write
$[q]=\{0,1,\ldots,q-1\}$ for a fixed alphabet of size $q$.  When finite-field
constructions are invoked, $q$ is a prime power, $\F_q$ is the field of $q$
elements, and $\F_q^{\times}=\F_q\setminus\{0\}$ is its multiplicative group.
We freely identify any $q$-element alphabet, such as $\F_q$, with $[q]$.

Fix a Cayley graph $X=\Cay(G,S)$ and an integer $q\geq2$.
All words are read in a fixed order on their window coordinates.

\begin{definition}[Translated word and separating pattern]
Let $f:G\to [q]$ be a labelling and let
\[
    Y=(y_1,\ldots,y_k)
\]
be an ordered tuple of distinct elements of $G$, called a \emph{pattern}.
Write $|Y|=k$, and use $Y$ also for its underlying set when discussing
containment or induced subgraphs.  The $Y$-word of $f$ at $g\in G$ is
\[
    W_{f,Y}(g)=\bigl(f(gy_1),\ldots,f(gy_k)\bigr)\in[q]^k.
\]
The pair $(f,Y)$ is called \emph{separating} if the map
\[
    g\mapsto W_{f,Y}(g)
\]
is injective.
\end{definition}

\begin{definition}[Separating pattern number]
The $q$-ary separating pattern number of $G$ is
\[
    \sep_q(G)=\min\{|Y|:\text{there exists }f:G\to[q]\text{ such that }(f,Y)\text{ is separating}\}.
\]
\end{definition}

This unrestricted number deliberately forgets the edges of the Cayley graph.
It will serve as an auxiliary baseline for the graph parameters below.

\begin{definition}[Cayley window]
Let $S$ be a chosen generating set or generating tuple.  A pattern
$Y\subseteq G$ is called a \emph{Cayley window} if
\[
    1\in Y,
    \qquad
    S\subseteq Y,
\]
and the subgraph of $\Cay(G,S)$ induced by $Y$ is connected after edge
orientations, if any, are forgotten.  The connected separating window number is
\[
\begin{split}
    \csep_q(G,S)=\min\{ |Y|:\;&Y\text{ is a Cayley window, and}
    \\ &\exists f:G\to[q]\text{ such that }(f,Y)\text{ is separating}\}.
\end{split}
\]
Thus for a non-symmetric tuple $S$, connectedness refers to the underlying
undirected graph of the right Cayley digraph.
\end{definition}

Thus
\[
    \sep_q(G)\leq \csep_q(G,S).
\]
The difference measures the cost of forcing the observation pattern to be local in the chosen Cayley graph.

\begin{definition}[One-step separating alphabet number]
Let $S=(s_1,\ldots,s_r)$ be an ordered generating tuple of $G$, and put
\[
    Y_S=\{1\}\cup S=\{1,s_1,\ldots,s_r\}.
\]
The displayed order, beginning with the identity, is used for the word
coordinates.
The one-step separating alphabet number is the least integer $q\geq2$ such
that there exists a labelling $f:G\to[q]$ for which
\[
    g\mapsto \bigl(f(g),f(gs_1),\ldots,f(gs_r)\bigr)
\]
is injective.  We denote this number by $\chi_1(G,S)$.
\end{definition}

The invariant $\chi_1(G,S)$ fixes the smallest natural local window and asks how many colors are needed.  This is closer to graph-identification colorings.  By contrast, $\sep_q(G)$ fixes the alphabet size and minimizes the pattern size.

\begin{proposition}[Basic bounds]
\label{prop:basic-bounds}
For every finite group $G$,
\[
    \sep_q(G)\geq \lceil \log_q |G|\rceil.
\]
For a Cayley graph $\Cay(G,S)$,
\[
    \csep_q(G,S)
    \geq
    \max\{\lceil\log_q|G|\rceil, |S\cup\{1\}|\}.
\]
If $S=(s_1,\ldots,s_r)$, then
\[
    \chi_1(G,S)\geq \left\lceil |G|^{1/(r+1)}\right\rceil.
\]
Moreover, separating patterns always exist; in fact $\sep_q(G)\leq |G|$ for every $q\geq 2$.
\end{proposition}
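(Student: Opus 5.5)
The lower bounds all follow from one pigeonhole count, so we treat them together. If $(f,Y)$ is separating with $|Y|=k$, then $g\mapsto W_{f,Y}(g)$ is an injection of $G$ into $[q]^k$. Hence $|G|\le q^k$, that is, $k\ge\log_q|G|$, and since $k$ is an integer, $k\ge\lceil\log_q|G|\rceil$. Minimizing over separating pairs gives the bound on $\sep_q(G)$.

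For $\csep_q(G,S)$ we argue as follows. Every Cayley window $Y$ with a separating labelling is in particular a separating pattern, so $|Y|\ge\lceil\log_q|G|\rceil$ (equivalently, $\csep_q\ge\sep_q$). By definition a Cayley window also contains $\{1\}\cup S$, so $|Y|\ge|S\cup\{1\}|$. Taking the maximum of the two bounds gives the claim. If no Cayley window admits a separating labelling, the minimum is over the empty set; we read it as $+\infty$, and the inequality holds trivially.

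For $\chi_1$, suppose $f:G\to[q]$ makes $g\mapsto(f(g),f(gs_1),\dots,f(gs_r))$ injective. The target has $q^{r+1}$ elements, so $|G|\le q^{r+1}$, which gives $q\ge|G|^{1/(r+1)}$. Since $q$ is an integer, $q\ge\lceil|G|^{1/(r+1)}\rceil$. If $\chi_1$ is only defined when some $q$ works, this is the bound. One could also note that $q=|G|$ with an injective $f$ always works, so $\chi_1$ is finite; the bound does not need this.

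For existence of separating patterns, take $Y$ to be all of $G$ in some order, and let $f$ be any labelling that is not constant. Such an $f$ exists since $q\ge2$; for $|G|=1$ the claim is trivial. Suppose $W_{f,Y}(g)=W_{f,Y}(h)$. Then $f(gy)=f(hy)$ for all $y\in G$. Writing $x=gy$, this says $f(x)=f(h g^{-1}x)$ for all $x$, so $f$ is invariant under left multiplication by $t=hg^{-1}$. A merely nonconstant $f$ is therefore not enough, and this is the main obstacle: we must choose $f$ with trivial left-stabilizer. We take $f$ to be the indicator of $\{1\}$. Then $f(tx)=f(x)$ for all $x$, evaluated at $x=1$, forces $f(t)=1$, so $t=1$ and $g=h$. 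Thus $(f,G)$ is separating, and $\sep_q(G)\le|G|$.
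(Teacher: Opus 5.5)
Your proposal is correct and matches the paper's proof. The lower bounds come from the same pigeonhole count on the $q^k$ (respectively $q^{r+1}$) possible words, with the $|S\cup\{1\}|$ term read off from the definition of a Cayley window, and existence uses the same choice of $Y=G$ with the delta function at the identity. Your trivial-left-stabilizer argument is just a rephrasing of the paper's observation that the unique nonzero coordinate of $W_{f,G}(g)$ sits at $y=g^{-1}$.
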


\begin{proof}
The lower bounds follow because a word of length $k$ over $[q]$ has only
$q^k$ possible values.  For existence, take $Y=G$ and let $f$ be the delta
function at the identity:
\[
    f(x)=\begin{cases}1,&x=1,\\ 0,&x\neq 1.\end{cases}
\]
Then $W_{f,G}(g)$ has its unique nonzero coordinate at $y=g^{-1}$, so it determines $g$.
\end{proof}

\begin{remark}[Relation with graph-identification parameters]
\label{rem:graph-identification}
Several established graph-identification models are close in motivation but
use different observations.  An identifying code distinguishes all vertices
by the intersections of a chosen code with their closed
neighborhoods~\cite{karpovsky}; size bounds under additional graph
restrictions, including triangle-free graphs, form a substantial part of that
literature~\cite{foucaud}.  A locating-dominating set instead uses open
neighborhood intersections to distinguish vertices outside the chosen
set~\cite{slater}.  A locating coloring distinguishes vertices
by their distance vectors to color classes~\cite{chartrand}; a
neighbor-locating coloring compares the sets of colors appearing in open
neighborhoods~\cite{alcon}; and a locally identifying coloring is a proper
coloring that distinguishes adjacent vertices with distinct closed
neighborhoods by the sets of colors appearing there~\cite{esperet}.

The present observation rule retains different information.  A fixed ordered
pattern $Y$ is translated equivariantly by the Cayley action, and the observer
reads the ordered word $(f(gy))_{y\in Y}$ rather than a set of colors or
detectors.  This leads to two complementary optimizations: fix the alphabet
and minimize an arbitrary pattern or a connected Cayley window
($\sep_q$ and $\csep_q$), or fix the one-step window $Y_S$ and minimize the
alphabet ($\chi_1$).  Thus the parameters here are not reformulations of the
locating-coloring parameters, even though all belong to the broader graph
identification setting.
\end{remark}

\section{Cyclic constructions for carbon tori}
\label{sec:cyclic}

We first develop a graph construction suggested by the classical De Bruijn
sequence viewpoint.  Choose a long cyclic direction in the Cayley graph,
split the vertices into its coset cycles, and encode all of those cycles
simultaneously.  The resulting observation window is a path; in a honeycomb
graph it becomes a two-edge zig-zag after subdividing each translation step.

\subsection{Cyclic-coset path windows}

Let $s\in G$ have order $m$, and put
\[
    H=\langle s\rangle.
\]
The right cosets of $H$ partition $G$.  On each coset $gH$, the sequence
\[
    g,
    gs,
    gs^2,
    \ldots,
    gs^{m-1}
\]
is a cycle in the $s$-direction.

\begin{definition}[Coset cycle packing]
For integers $m\geq1$ and $1\leq k\leq m$, let $M_q(m,k)$ denote the maximum
number of cyclic
$q$-ary words of length $m$ whose cyclic length-$k$ factors are all distinct,
both within each word and across different words.
\end{definition}

Our convention is that the $q$-ary De Bruijn digraph of order $k$ has
length-$(k-1)$ words as vertices and length-$k$ words as directed edges: the
edge $x_1\cdots x_k$ runs from $x_1\cdots x_{k-1}$ to
$x_2\cdots x_k$.  A cyclic word traces a directed closed walk through its
successive length-$k$ factors.  The required distinctness makes this walk an
edge-simple closed directed trail.  Hence $M_q(m,k)$ is equivalently the
maximum number of pairwise edge-disjoint closed directed trails of length
$m$ in this digraph.

\begin{proposition}[Cyclic-coset bound]
Let $2\leq k\leq m$, let $s\in G$ have order $m$, let
$H=\langle s\rangle$, and let
\[
    r=[G:H].
\]
Thus $r=|G|/|H|$ is the number of right cosets of $H$ in $G$.
If
\[
    r\leq M_q(m,k),
\]
then
\[
    Y_k=(1,s,s^2,\ldots,s^{k-1})
\]
is a separating pattern for some labelling $f:G\to[q]$.  In particular,
\[
    \sep_q(G)\leq k.
\]
If $s\in S$ for a Cayley graph $\Cay(G,S)$, then
\[
    Y_k\cup S
\]
is a Cayley window, and hence
\[
    \csep_q(G,S)\leq k+|S|-1.
\]
\end{proposition}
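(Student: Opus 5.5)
The plan is to build the labelling coset by coset. First choose right-coset representatives $g_1,\ldots,g_r$ of $H=\langle s\rangle$, so that every element of $G$ is uniquely $g_is^j$ with $1\le i\le r$ and $0\le j<m$. Since $r\le M_q(m,k)$, the definition of $M_q(m,k)$ gives $r$ cyclic $q$-ary words $w^{(1)},\ldots,w^{(r)}$ of length $m$. Their cyclic length-$k$ factors are pairwise distinct, both within each word and across words. We then define
\[
    f(g_is^j)=w^{(i)}_{j \bmod m},
\]
reading the indices of each word cyclically. This is well defined because the coset decomposition is unique and $s$ has order exactly $m$.

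Next we compute the word at a general element $g=g_is^j$:
\[
    W_{f,Y_k}(g)=\bigl(f(g_is^{j}),f(g_is^{j+1}),\ldots,f(g_is^{j+k-1})\bigr)
    =\bigl(w^{(i)}_j,\ldots,w^{(i)}_{j+k-1}\bigr),
\]
which is the cyclic length-$k$ factor of $w^{(i)}$ starting at position $j$. We need the entries of $Y_k$ to be distinct for it to be a pattern, and this holds since $k\le m$. Injectivity then follows directly. If $W(g_is^j)=W(g_{i'}s^{j'})$, the two factors coincide. Distinctness across words forces $i=i'$, and distinctness within a word forces $j\equiv j'\pmod m$, so the two elements are equal. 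Hence $(f,Y_k)$ is separating and $\sep_q(G)\le k$.

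For the Cayley window claim, suppose $s\in S$. The set $Y_k\cup S$ contains $1$ and $S$. It is connected because $1,s,\ldots,s^{k-1}$ form a path along $s$-edges, and each $t\in S$ is joined to $1$ by its $t$-edge. A separating property is preserved when the pattern is enlarged, since adding coordinates to the words keeps them distinct. So the same $f$ separates with the pattern $Y_k\cup S$, ordered in any fixed way. Counting elements, $|Y_k\cup S|\le k+|S|-1$, because $1$ and $s$ are shared with $Y_k$ when $k\ge2$. This gives $\csep_q(G,S)\le k+|S|-1$.

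None of these steps is hard. The only care needed is in the indexing: cyclic factors must be read modulo $m$, and this matches $s^m=1$ exactly. It also matters that the required distinctness covers factors both within a single word and across different words, which is precisely how $M_q$ was defined.
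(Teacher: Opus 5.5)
Your proof is correct and follows the paper's argument essentially step for step. You assign one of the $r$ packed cyclic words to each coset $g_iH$, identify the $Y_k$-word at $g_is^j$ with the cyclic factor of $w^{(i)}$ starting at $j$, and obtain connectivity from the $s$-path together with the star at $1$. You also make explicit points the paper leaves tacit: the distinctness of $1,s,\ldots,s^{k-1}$, that separation survives enlarging the pattern, and the size count. On the count, note that the only guaranteed overlap of $Y_k$ and $S$ is $s$ itself, since $1\notin S$; the bound $k+|S|-1$ still holds exactly as you state it.
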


\begin{proof}
Choose representatives $g_1,\ldots,g_r$ for the right cosets of $H$.  By the hypothesis, choose $r$ cyclic words $C_1,\ldots,C_r$ of length $m$ such that all cyclic length-$k$ factors are distinct across all chosen words.  Write
\[
    C_i=(c_{i,0},c_{i,1},\ldots,c_{i,m-1}).
\]
Define
\[
    f(g_i s^j)=c_{i,j},
\]
with indices taken modulo $m$.  Then the $Y_k$-word at $g_i s^j$ is exactly the cyclic length-$k$ factor of $C_i$ beginning at $j$.  These factors are all distinct by construction, so $Y_k$ separates $G$.

If $s\in S$, then $Y_k$ is a path in the $s$-direction and contains $1$ and $s$.  Adding the remaining elements of $S$ keeps the induced subgraph connected, since all elements of $S$ are adjacent to $1$.
\end{proof}

The preceding proposition is useful but conditional.  The following finite-field version gives a more explicit supply of coset cycles.

\begin{theorem}[Finite-field cyclic-coset construction]
Let $q$ be a prime power, let $1\leq k\leq m$, let $s\in G$ have order $m$, let
$H=\langle s\rangle$, and put
$r=[G:H]$.  Suppose there exists an element
$\theta\in \F_{q^k}^{\times}$ whose multiplicative order
$\ord(\theta)$ satisfies
\[
    \ord(\theta)=m
\]
and
\[
    \{1,\theta,\ldots,\theta^{k-1}\}
\]
is an $\F_q$-basis of $\F_{q^k}$.  If
\[
    r\leq \frac{q^k-1}{m},
\]
then
\[
    \sep_q(G)\leq k.
\]
More precisely, $Y_k=(1,s,\ldots,s^{k-1})$ is separating for a suitable labelling.
\end{theorem}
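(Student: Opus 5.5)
The construction labels each $s$-cycle by a linear functional evaluated along a multiplicative orbit of $\theta$. This produces the coset-cycle labelling of the cyclic-coset bound directly, and the only condition to check is that windows at distinct group elements are distinct.

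Put $N=\langle\theta\rangle\le\F_{q^k}^{\times}$. This is a subgroup of order $m$, and it has $(q^k-1)/m$ cosets $\alpha N$ in $\F_{q^k}^{\times}$. Since $r\le (q^k-1)/m$, we can choose representatives $g_1,\ldots,g_r$ of the right cosets $g_iH$ of $H$ in $G$, together with elements $\alpha_1,\ldots,\alpha_r\in\F_{q^k}^{\times}$ lying in pairwise distinct cosets of $N$. Let
\[
    \lambda:\F_{q^k}\to\F_q
\]
be the linear functional that reads off the coefficient of $1$ in the basis $\{1,\theta,\ldots,\theta^{k-1}\}$. The specific choice does not matter; we only need the $k$ functionals $x\mapsto\lambda(x\theta^t)$ for $0\le t\le k-1$ to be linearly independent. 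Define
\[
    f(g_is^j)=\lambda(\alpha_i\theta^j),
    \qquad 0\le j<m.
\]
This is well defined because $\ord(s)=\ord(\theta)=m$: every element of $G$ has a unique such expression, and the exponent $j$ may be read modulo $m$ on both sides. The $Y_k$-word at $g=g_is^j$ is then
\[
    \bigl(\lambda(x),\lambda(x\theta),\ldots,\lambda(x\theta^{k-1})\bigr),
    \qquad x=\alpha_i\theta^j .
\]

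The key step is that the map $\Phi:x\mapsto(\lambda(x\theta^t))_{t=0}^{k-1}$ is an injective $\F_q$-linear map $\F_{q^k}\to\F_q^k$. Both spaces have dimension $k$, so it suffices to show $\ker\Phi=0$. Suppose $x\neq0$ and $\lambda(x\theta^t)=0$ for all $t<k$. Since $\{1,\theta,\ldots,\theta^{k-1}\}$ spans $\F_{q^k}$, linearity gives $\lambda(xy)=0$ for every $y\in\F_{q^k}$. Multiplication by $x$ is bijective, so $\lambda$ vanishes identically, which contradicts $\lambda\neq0$.

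Finally, the assignment $g_is^j\mapsto\alpha_i\theta^j$ is injective from $G$ into $\F_{q^k}^{\times}$. Different $i$ give elements in different $N$-cosets, and for fixed $i$ the exponents $j$ modulo $m=\ord(\theta)$ give distinct elements. Composing with the injective map $\Phi$ shows that $g\mapsto W_{f,Y_k}(g)$ is injective. So $Y_k$ is separating for this $f$, and therefore $\sep_q(G)\le k$.

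The main obstacle is only the bookkeeping. We need $\ord(\theta)=m$ exactly, rather than a divisor of $m$, so that both the labelling is well defined and distinct group elements map to distinct field elements; the hypothesis supplies exactly this. The spanning property of the basis is what makes $\Phi$ injective.
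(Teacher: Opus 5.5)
Your proof is correct and takes the same approach as the paper. You pick elements in distinct cosets of $\langle\theta\rangle$ in $\F_{q^k}^{\times}$ and label $g_is^j$ by a nonzero linear functional evaluated at $\alpha_i\theta^j$; the basis property makes $x\mapsto(\lambda(x\theta^t))_{t<k}$ injective, and disjointness of the cosets finishes the argument, as in the paper. Your kernel argument for that injectivity is the paper's basis-observation lemma, and you also spell out why $\ord(\theta)=m$ is needed both for well-definedness and for injectivity, which the paper leaves implicit.
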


\begin{proof}
Let $K=\F_{q^k}$ and let $L=\langle\theta\rangle\leq K^\times$.  Since $|L|=m$, the number of cosets of $L$ in $K^\times$ is $(q^k-1)/m$.  Choose distinct coset representatives
\[
    c_1L,\ldots,c_rL.
\]
Let $\psi:K\to\F_q$ be a nonzero $\F_q$-linear functional.  For coset representatives $g_1,
\ldots,g_r$ of $G/H$, define
\[
    f(g_i s^j)=\psi(c_i\theta^j).
\]
The word at $g_i s^j$ is
\[
    \bigl(\psi(c_i\theta^j),\psi(c_i\theta^{j+1}),\ldots,\psi(c_i\theta^{j+k-1})\bigr).
\]
Because $\{1,\theta,\ldots,
\theta^{k-1}\}$ is a basis of $K$ over $\F_q$, the map
\[
    x\mapsto (\psi(x),\psi(x\theta),\ldots,\psi(x\theta^{k-1}))
\]
is injective on $K$.  Thus equality of two such words implies
\[
    c_i\theta^j=c_{i'}\theta^{j'},
\]
which forces $i=i'$ and $j=j'$ because the cosets $c_iL$ are distinct.  Hence the words separate $G$.
\end{proof}

\subsection{Finite-field cyclic benchmark}
\label{sec:models}

Finite cyclic groups provide the basic graph model for separating windows.
Finite-field coordinates give particularly transparent labellings; the
algebra here is only a device for constructing the graph labelling.

\begin{lemma}[Basis observation]
Let $q$ be a prime power, let $n\geq1$, let $K=\F_{q^n}$, and let
$\psi:K\to\F_q$ be a nonzero
$\F_q$-linear functional.  If
\[
    Y=(y_1,\ldots,y_n)
\]
is an $\F_q$-basis of $K$, then
\[
    x\mapsto (\psi(xy_1),\ldots,\psi(xy_n))
\]
is an $\F_q$-linear isomorphism from $K$ to $\F_q^n$.
\end{lemma}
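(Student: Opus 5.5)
The map $\Phi:x\mapsto(\psi(xy_1),\ldots,\psi(xy_n))$ is a map between two $\F_q$-vector spaces of the same dimension $n$. So the plan is to show that it is $\F_q$-linear and injective; surjectivity then follows from dimension count. Linearity is immediate. Multiplication by a fixed $y_i$ is $\F_q$-linear on $K$, since $\F_q\subseteq K$ is central and the field operations distribute. Composing with the $\F_q$-linear functional $\psi$ keeps each coordinate linear, and a tuple of linear coordinates is a linear map into $\F_q^n$.

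For injectivity, it suffices to show that the kernel is trivial. Suppose $x\neq0$ and $\psi(xy_i)=0$ for all $i$. Since $(y_1,\ldots,y_n)$ is an $\F_q$-basis of $K$ and $x\neq0$, multiplication by $x$ is an $\F_q$-linear bijection $K\to K$. Hence $(xy_1,\ldots,xy_n)$ is again an $\F_q$-basis of $K$. A linear functional vanishing on a basis vanishes identically, so $\psi=0$, contradicting the hypothesis that $\psi$ is nonzero. Thus $\ker\Phi=0$.

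Finally, $\dim_{\F_q}K=n=\dim_{\F_q}\F_q^n$, so the injective linear map $\Phi$ is an isomorphism. No step is a real obstacle. The only point needing care is the observation that $x\cdot(\text{basis})$ is again a basis, and this uses only that $x$ is invertible in the field $K$.
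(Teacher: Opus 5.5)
Your proof is correct and matches the paper's argument: a nonzero kernel element $x$ would make $xY$ a basis on which $\psi$ vanishes, forcing $\psi=0$, and equality of dimensions then gives the isomorphism. The only difference is that you verify $\F_q$-linearity explicitly, which the paper leaves implicit.
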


\begin{proof}
If the displayed map sends $x$ to zero and $x\neq 0$, then $xY=(xy_1,\ldots,xy_n)$ is again a basis of $K$, so $\psi$ vanishes on a basis, hence $\psi=0$, a contradiction.  Thus the map is injective, and the two vector spaces have the same dimension.
\end{proof}

\begin{proposition}[Cyclic finite fields]
For every prime power $q$ and $n\geq 1$,
\[
    \sep_q(\F_{q^n}^{\times})=n,
\]
except for the trivial case $q=2,n=1$, where the group has one element.
\end{proposition}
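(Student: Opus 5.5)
The equality splits into a lower bound, which comes directly from Proposition~\ref{prop:basic-bounds}, and an upper bound, which comes from the Basis observation lemma applied to a pattern drawn from the group itself.

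\emph{Lower bound.} Put $G=\F_{q^n}^{\times}$, so $|G|=q^n-1$. By Proposition~\ref{prop:basic-bounds}, $\sep_q(G)\geq\lceil\log_q(q^n-1)\rceil$. When $q^n-1\geq2$ we have $q^{n-1}<q^n-1$. This holds for $n\geq2$ because $q^n-q^{n-1}=q^{n-1}(q-1)\geq2>1$. It also holds for $n=1$ with $q\geq3$, because then $1<q-1$. Hence $\lceil\log_q(q^n-1)\rceil=n$. The only excluded case is $q^n-1=1$, i.e.\ $q=2$, $n=1$, where the group is trivial and the empty pattern already separates.

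\emph{Upper bound.} I would take $Y=(y_1,\ldots,y_n)$ to be an $\F_q$-basis of $K=\F_{q^n}$ whose entries are distinct nonzero elements, so that $Y$ is a legitimate pattern in $G$. Any basis has this property automatically, since basis vectors are nonzero and distinct. Fix a nonzero $\F_q$-linear functional $\psi:K\to\F_q$ and define $f=\psi|_G:G\to\F_q\cong[q]$. Then
\[
W_{f,Y}(g)=\bigl(\psi(gy_1),\ldots,\psi(gy_n)\bigr).
\]
By the Basis observation lemma, the map $x\mapsto(\psi(xy_i))_i$ is injective on all of $K$, so it is injective on $G$. Thus $(f,Y)$ is separating, and $\sep_q(G)\leq n$.

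The only step needing care is the boundary arithmetic for the logarithm when $n=1$. For $q\geq3$ we get $\lceil\log_q(q-1)\rceil=1$, because $q-1\geq2$ and the logarithm is positive. Everything else is immediate from the earlier results.
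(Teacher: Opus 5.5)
Your proof is correct and matches the paper's: the lower bound comes from the counting bound via $q^{n-1}<q^n-1$, and the upper bound comes from the basis observation lemma with a nonzero functional $\psi$. The only difference is that the paper takes the specific basis $Y=(1,\alpha,\ldots,\alpha^{n-1})$ for a primitive element $\alpha$, which also makes the pattern a path of consecutive powers of a generator (the maximal-length analogue of a de Bruijn sequence); your arbitrary basis is enough for $\sep_q$ alone and avoids having to note that $\alpha$ has degree $n$ over $\F_q$.
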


\begin{proof}
The lower bound follows from
\[
    |\F_{q^n}^{\times}|=q^n-1>q^{n-1}
\]
in the nontrivial case.  For the upper bound choose a primitive element
$\alpha\in \F_{q^n}^{\times}$, meaning a generator of this multiplicative
group, and use
\[
    Y=(1,\alpha,\ldots,\alpha^{n-1}).
\]
The basis observation shows that this pattern separates $\F_{q^n}^{\times}$.
\end{proof}

This is the usual maximal-length finite-field analogue of a De Bruijn
sequence.  It is exhaustive in the sense that the image consists of all
nonzero vectors of $\F_q^n$, and it is exactly the instance of the basis
observation used for the finite-field carbon-torus benchmark of
Section~\ref{sec:carbon}.

\section{One-step matrix coefficients for the spherical example}
\label{sec:toolbox}

This section records the one algebraic device needed for the spherical
example.  The Cayley graph and its ordered one-step window remain fixed;
matrix coefficients supply a certificate that the translated words are
distinct.

Let $q$ be a prime power and let $V$ be a finite-dimensional
$\F_q$-vector space.  We write
$V^*=\operatorname{Hom}_{\F_q}(V,\F_q)$ for its dual space,
$\End(V)$ for its endomorphism algebra, and $\GL(V)$ for its group of
invertible linear maps; when $V=\F_q^d$, we also write $\GL_d(\F_q)$.
Let $\rho:G\to\GL(V)$ be a representation, meaning a group homomorphism.  For
$v\in V$ and $\lambda\in V^*$, define the
matrix-coefficient labelling
\[
    f_{\lambda,v}(g)=\lambda(\rho(g)v).
\]
Here and below $\Span_{\F_q}$ denotes $\F_q$-linear span.  For the prescribed
local window, only the following restricted orbit matters.

Let $S=(s_1,\ldots,s_r)$ be an ordered generating tuple and let
\[
    Y_S=\{1\}\cup S=\{1,s_1,\ldots,s_r\}.
\]
For a representation $\rho:G\to\GL(V)$ and $v\in V$, set
\[
    U_S=\Span_{\F_q}\{v,\rho(s_1)v,\ldots,\rho(s_r)v\}.
\]

\begin{proposition}[One-step matrix-coefficient criterion]
\label{prop:one-step-mc}
For the labelling
\[
    f(g)=\lambda(\rho(g)v),
\]
the one-step window $Y_S$ is separating if and only if the functionals
\[
    \lambda\circ\rho(g)|_{U_S},
    \qquad g\in G,
\]
    are pairwise distinct.  In particular, if this pairwise-distinctness
    requirement (the \emph{restricted regular-orbit condition}) holds, then
\[
    \chi_1(G,S)\leq q.
\]
If additionally
\[
    (q-1)^{r+1}<|G|\leq q^{r+1},
\]
then
\[
    \chi_1(G,S)=q.
\]
\end{proposition}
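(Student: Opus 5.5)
The plan is to unwind the definition of the one-step word and rewrite each coordinate as the value of a single functional on a vector of $U_S$. For $g\in G$ and each $y\in Y_S$ we have
\[
    f(gy)=\lambda(\rho(g)\rho(y)v)=\bigl(\lambda\circ\rho(g)\bigr)(\rho(y)v).
\]
So the $Y_S$-word at $g$ is the tuple of values of the functional $\phi_g:=\lambda\circ\rho(g)$ on the ordered spanning family
\[
    (v,\rho(s_1)v,\ldots,\rho(s_r)v)
\]
of $U_S$.

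The key step is the elementary fact that a linear functional on $U_S$ is determined by its values on any spanning family, and conversely that two functionals on $U_S$ with the same restriction take the same values on that family. Hence $W_{f,Y_S}(g)=W_{f,Y_S}(g')$ holds exactly when $\phi_g|_{U_S}=\phi_{g'}|_{U_S}$. This gives both directions of the equivalence at once. We must not assume the family is a basis: it may be linearly dependent, which is why the condition is phrased via restrictions to $U_S$ rather than via coordinates.

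The upper bound $\chi_1(G,S)\le q$ then follows directly from the definition of $\chi_1$. The labelling $f$ takes values in $\F_q$, which we identify with $[q]$, and it separates the window $Y_S$.

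For the equality statement I would combine this with Proposition~\ref{prop:basic-bounds}. The hypothesis $(q-1)^{r+1}<|G|$ means that no alphabet of size $q-1$ can produce $|G|$ distinct words of length $r+1$. The one point needing care is that $Y_S$ has at most $r+1$ distinct elements, so the word length is at most $r+1$ and the counting bound applies. Any alphabet of size $q'\le q-1$ is also excluded, since $q'^{\,r+1}\le (q-1)^{r+1}<|G|$. Therefore $\chi_1(G,S)\ge q$, and equality follows. The condition $|G|\le q^{r+1}$ is only consistent with the construction and is not needed for the argument.

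None of these steps is a genuine obstacle. The only subtle point is the possible linear dependence of the spanning family, which the restriction formulation handles.
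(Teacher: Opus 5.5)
Your proof is correct and matches the paper's argument: the one-step word at $g$ lists the values of $\lambda\circ\rho(g)|_{U_S}$ on the spanning tuple $v,\rho(s_1)v,\ldots,\rho(s_r)v$, so two words agree exactly when the restricted functionals agree, and the equality $\chi_1(G,S)=q$ then follows from the counting lower bound. Your two side remarks are also accurate: the spanning family need not be a basis, and $|G|\le q^{r+1}$ is redundant because it already follows from the restricted regular-orbit condition.
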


\begin{proof}
The one-step word at $g$ records the values of
$\lambda\circ\rho(g)|_{U_S}$ on the spanning tuple
\[
    v,\rho(s_1)v,\ldots,\rho(s_r)v.
\]
Thus two one-step words agree if and only if the corresponding restricted
functionals agree.  This proves the equivalence and the upper bound.  The
equality statement follows from the counting lower bound.
\end{proof}

\begin{lemma}[Dimension screen for matrix-coefficient certificates]
\label{lem:mc-dimension-screen}
Let $k\geq1$, let $Y=(y_1,\ldots,y_k)$, and put
\[
  U_Y=\Span_{\F_q}\{\rho(y_1)v,\ldots,\rho(y_k)v\}.
\]
If the matrix coefficient $f(g)=\lambda(\rho(g)v)$ separates $G$ on $Y$,
then
\[
  q^{\dim U_Y}\geq |G|.
\]
In particular, a representation of dimension $d$ cannot support such a
certificate when $q^d<|G|$.
\end{lemma}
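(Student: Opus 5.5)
The plan is to factor the word map through the subspace $U_Y$ and then count. For $g\in G$, the $Y$-word of $f=\lambda(\rho(\cdot)v)$ at $g$ is
\[
  W_{f,Y}(g)=\bigl(\lambda(\rho(g)\rho(y_1)v),\ldots,\lambda(\rho(g)\rho(y_k)v)\bigr),
\]
since $\rho$ is a homomorphism and $\rho(gy_i)=\rho(g)\rho(y_i)$. Each coordinate is therefore the value of the restricted functional $\phi_g:=\lambda\circ\rho(g)|_{U_Y}\in U_Y^*$ on the spanning vector $\rho(y_i)v$ of $U_Y$. This is exactly the argument in the proof of Proposition~\ref{prop:one-step-mc}, with $Y_S$ replaced by an arbitrary pattern $Y$.

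First I observe that the word map factors as $g\mapsto\phi_g\mapsto(\phi_g(\rho(y_1)v),\ldots,\phi_g(\rho(y_k)v))$. The second map is injective on $U_Y^*$, because a linear functional is determined by its values on a spanning set, but injectivity of the second map is not even needed. If $(f,Y)$ is separating, the composite $g\mapsto W_{f,Y}(g)$ is injective, so the first map $g\mapsto\phi_g$ is injective. Its target $U_Y^*$ is an $\F_q$-vector space of dimension $\dim U_Y$, so it has exactly $q^{\dim U_Y}$ elements. Injectivity therefore gives $|G|\leq q^{\dim U_Y}$.

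For the final sentence, $U_Y\subseteq V$, so $\dim U_Y\leq d$. Hence $q^d<|G|$ would force $q^{\dim U_Y}<|G|$, contradicting the bound just proved. I see no real obstacle; the only point needing care is to note that the words determine, and are determined by, the restricted functionals $\phi_g$, so that the count in the target is taken in $U_Y^*$ rather than in $\F_q^k$. That choice is what improves the trivial bound $q^k\geq|G|$ to the dimension bound.
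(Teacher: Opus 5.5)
Your proof is correct and takes the same approach as the paper: the $Y$-word at $g$ is determined by the restricted functional $\lambda\circ\rho(g)|_{U_Y}$, so separation forces $|G|$ distinct elements of $U_Y^*$, which has $q^{\dim U_Y}$ elements. Your explicit check $\dim U_Y\leq d$ for the final sentence fills in the step the paper leaves implicit.
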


\begin{proof}
The word at $g$ is determined by the restricted functional
$\lambda\circ\rho(g)|_{U_Y}$.  There are only
$q^{\dim U_Y}$ linear functionals on $U_Y$, whereas separation requires at
least $|G|$ distinct restrictions.
\end{proof}

\section{A spherical Cayley graph and a finite simple benchmark}
\label{sec:spherical}

Our spherical model is the truncated icosahedral graph, the graph of
Buckminsterfullerene.  Here $A_5$ is the alternating group on
$\{0,1,2,3,4\}$.  It has the Cayley realization
\[
    X_{\mathrm{sph}}=
    \Cay\bigl(A_5,\{a,b,b^{-1}\}\bigr),
    \qquad
    a=(0\ 1)(2\ 3),\quad b=(0\ 1\ 2\ 3\ 4),
\]
used in the spherical drawing theory of~\cite{kang-lin}.  The $b$-cycles are
the pentagonal directions, while the relation $(ab)^3=1$ gives alternating
six-step cycles.  Thus the ordered window $(1,a,b,b^{-1})$ is precisely a
vertex together with its three incident Cayley directions.

For comparison,
$\operatorname{PSL}_2(\F_7)=\operatorname{SL}_2(\F_7)/\{\pm I\}$ is the
projective special linear group, where $\operatorname{SL}_2(\F_7)$ is the
group of determinant-one $2\times2$ matrices and $I$ is the identity matrix.
Also,
$\mathbb{P}^1(\F_7)=\F_7\cup\{\infty\}$ is its projective line.  The graph on
$\operatorname{PSL}_2(\F_7)$ below is not asserted to be planar;
it is included as a finite simple-group comparison.  This separates the local
window phenomenon from the special spherical embedding of the $A_5$ graph.

For the $A_5$ tuple above, $a$, $b$, and $ab$ have orders $2$, $5$, and
$3$, respectively.  For the comparison graph, take
$\operatorname{PSL}_2(\F_7)$ acting on $\mathbb{P}^1(\F_7)$ with the
projective transformations
\[
    a(x)=-\frac1x,
    \qquad
    b(x)=x+1.
\]
Here $a$, $b$, and $ab$ have orders $2$, $7$, and $3$.  In each case the
displayed elements generate the stated group and $S=(a,b,b^{-1})$ is a
symmetric cubic generating tuple.

\begin{computationalproposition}[Finite simple-group benchmarks]
\label{prop:simple-group-benchmarks}
For the preceding generating tuples,
\[
    \chi_1(A_5,S)=3,
    \qquad
    \chi_1(\operatorname{PSL}_2(\F_7),S)=4.
\]
\end{computationalproposition}

\begin{proof}
The counting bound gives the lower estimates because
$2^4<60=|A_5|$ and $3^4<168=|\operatorname{PSL}_2(\F_7)|$.

For $A_5$, let $\rho_6:A_5\to\GL_6(\F_3)$ be the permutation representation
on the cosets of the normalizer of a Sylow $5$-subgroup, equivalently the
action of $A_5\cong\operatorname{PSL}_2(\F_5)$ on
$\mathbb{P}^1(\F_5)$.  The supplementary certificate gives
$v\in\F_3^6$ and $\lambda\in(\F_3^6)^*$ satisfying the restricted
regular-orbit condition of Proposition~\ref{prop:one-step-mc}; hence
$\chi_1(A_5,S)\leq3$.

For $\operatorname{PSL}_2(\F_7)$, let
$\rho_8:\operatorname{PSL}_2(\F_7)\to\GL_8(\F_4)$ be the permutation
representation on $\mathbb{P}^1(\F_7)$.  A second supplementary certificate
provides $v\in\F_4^8$ and $\lambda\in(\F_4^8)^*$ satisfying the same
condition, so $\chi_1(\operatorname{PSL}_2(\F_7),S)\leq4$.  The exact
certificate check also confirms that the corresponding one-step span has
dimension $4$.  Independent explicit labelling certificates provide a
second verification of both values.
\end{proof}

\begin{remark}[Certificate versus module-selection principle]
The dimension screen explains why a $3$-dimensional
$\operatorname{PSL}_2(\F_7)$ module cannot work over $\F_4$, since
$4^3<168$.  It does not explain why the degree-six action of $A_5$ contains a
successful pair $(v,\lambda)$ while the natural degree-five permutation
module does not, or why the degree-eight projective action is effective for
$\operatorname{PSL}_2(\F_7)$.  The certificates prove these facts, not a
general module-selection principle.
\end{remark}

\section{Honeycomb toroidal Cayley graphs and carbon tori}
\label{sec:carbon}

Carbon tori are the main family for the connected-window problem.  The
Cayley-graph viewpoint on fullerenes is developed in~\cite{kang}; Alspach and Dean show
that honeycomb toroidal graphs admit generalized-dihedral Cayley
descriptions~\cite{alspach}.  From the present graph-theoretic viewpoint, a
carbon torus is a cubic Cayley graph embedded on a torus with hexagonal faces.
Its local window has four vertices,
but its long translation directions wrap around the torus.  The central
questions are therefore to compare the group-level baseline $\sep_q(G_N)$
with the connected number $\csep_q(G_N,S_N)$ and to determine the one-step
alphabet number $\chi_1(G_N,S_N)$.  A useful Cayley model arises from
generalized dihedral groups.

\subsection{The generalized dihedral model}

Let $A$ be a finite abelian group, written additively in this subsection.  Put
\[
    \Dih(A)=A\rtimes C_2,
\]
where $C_2=\langle\tau\rangle$ acts on $A$ by $x\mapsto-x$.  In coordinates
we identify $\tau^\epsilon$ with $\epsilon\in\{0,1\}$, and all second
coordinates are read modulo $2$.  Thus
\[
    (x,\epsilon)(y,\delta)=(x+(-1)^\epsilon y,\epsilon+\delta).
\]

In the honeycomb-torus model, one takes
\[
    A_N=\Z^2/N
\]
for a finite-index sublattice $N\subseteq\Z^2$, and
\[
    G_N=A_N\rtimes C_2.
\]
A natural cubic generating set is
\[
    S_N=
    \{(v_1,1),(v_1+v_2,1),(v_2,1)\},
\]
where $v_1,v_2$ are the images of the standard basis vectors of $\Z^2$.
These generators are involutions.  We call such a honeycomb quotient
\emph{nondegenerate} when the three displayed directions remain distinct and
no hexagonal face collapses; in the finite examples below, the latter
condition is checked by the absence of $4$-cycles.  A nondegenerate quotient
is a finite quotient of the hexagonal lattice.

The ordered one-step window at $g$ is
$gY_S=(g,gs_1,gs_2,gs_3)$.  Figure~\ref{fig:honeycomb-torus} shows how a
honeycomb patch is closed to form a toroidal quotient.

\begin{figure}[H]
\centering
\begin{tikzpicture}[scale=0.72, every node/.style={font=\small}]
  \newcommand{\Apt}[2]{({1.732*(#1)+0.866*(#2)},{1.5*(#2)})}
  \newcommand{\Bpt}[2]{({1.732*(#1)+0.866*(#2)-1.732},{1.5*(#2)-1})}
  \tikzset{
    Avertex/.style={circle,fill=black,inner sep=1.2pt},
    Bvertex/.style={circle,fill=white,draw=black,line width=0.45pt,inner sep=1.1pt}
  }

  \begin{scope}
    \clip (-1.25,-1.15) rectangle (10.15,5.25);

    \foreach \i in {-3,-2,-1,0,1,2,3,4,5,6,7} {
      \foreach \j in {-3,-2,-1,0,1,2,3,4,5,6} {
        \draw[gray!60,line width=0.65pt] \Apt{\i}{\j}--\Bpt{\i+1}{\j};
        \draw[gray!60,line width=0.65pt] \Apt{\i}{\j}--\Bpt{\i+1}{\j+1};
        \draw[gray!60,line width=0.65pt] \Apt{\i}{\j}--\Bpt{\i}{\j+1};
      }
    }
    \foreach \i in {-3,-2,-1,0,1,2,3,4,5,6,7} {
      \foreach \j in {-3,-2,-1,0,1,2,3,4,5,6} {
        \node[Avertex] at \Apt{\i}{\j} {};
        \node[Bvertex] at \Bpt{\i}{\j} {};
      }
    }

    \coordinate (P00) at (-0.45,-0.50);
    \coordinate (P10) at (6.478,-0.50);
    \coordinate (P01) at (2.148,4.00);
    \coordinate (P11) at (9.076,4.00);
    \draw[blue!75,line width=1.15pt] (P00)--(P10)--(P11)--(P01)--cycle;

    \draw[-{Latex[length=2.6mm]},blue!75,line width=0.95pt]
      ($(P00)!0.10!(P10)$)--($(P00)!0.30!(P10)$)
      node[midway,below=3pt] {$v_1$};
    \draw[-{Latex[length=2.6mm]},blue!75,line width=0.95pt]
      ($(P00)!0.10!(P01)$)--($(P00)!0.30!(P01)$)
      node[midway,left=3pt] {$v_2$};

    \draw[-{Latex[length=3mm]},red!75,line width=1.0pt]
      ($(P00)!0.42!(P10)+(0,-0.28)$)--($(P00)!0.62!(P10)+(0,-0.28)$);
    \draw[-{Latex[length=3mm]},red!75,line width=1.0pt]
      ($(P01)!0.42!(P11)+(0,0.28)$)--($(P01)!0.62!(P11)+(0,0.28)$);
    \draw[-{Latex[length=3mm]},orange!85!black,line width=1.0pt]
      ($(P00)!0.42!(P01)+(-0.25,0)$)--($(P00)!0.62!(P01)+(-0.25,0)$);
    \draw[-{Latex[length=3mm]},orange!85!black,line width=1.0pt]
      ($(P10)!0.42!(P11)+(0.25,0)$)--($(P10)!0.62!(P11)+(0.25,0)$);
  \end{scope}
\end{tikzpicture}
\caption{A clipped honeycomb tiling with a fundamental parallelogram.  Black
and white vertices represent the two bipartite layers $(u,0)$ and
$(u,1)$, with $u\in A_N$.  Each pair of matching side arrows is identified;
the quotient
is a finite honeycomb toroidal Cayley graph, or carbon torus.}
\label{fig:honeycomb-torus}
\end{figure}

\FloatBarrier
\subsection{Orientation doubling}

We return temporarily to multiplicative notation for a finite abelian group $A$.  Let
\[
    \Dih(A)=A\rtimes C_2
\]
where $C_2=\langle \tau\rangle$ acts by inversion.

\begin{lemma}[Orientation doubling]
Let $q$ be an odd prime power.  Suppose $h:A\to\F_q$ and
$Y=(y_1,\ldots,y_k)\subseteq A$ satisfy:
\begin{enumerate}[label=\textup{(\arabic*)}]
    \item $1\in Y$;
    \item $Y$ separates $A$ under $h$;
    \item $Y^{-1}$ also separates $A$ under $h$.
\end{enumerate}
Define
\[
    f(a)=h(a),
    \qquad
    f(a\tau)=h(a)+1.
\]
Then
\[
    Y'=Y\cup\{\tau\}
\]
separates $\Dih(A)$.  Hence
\[
    \sep_q(\Dih(A))\leq |Y|+1.
\]
\end{lemma}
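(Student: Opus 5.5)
The plan is to write out the $Y'$-word explicitly at the two kinds of elements of $\Dih(A)$, namely $a$ and $a\tau$ with $a\in A$. First I will use the word to decide which kind of element we are at. Then, within each kind, I will invoke hypothesis (2) or (3).

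\emph{Setup.} Order the pattern as $Y'=(y_1,\ldots,y_k,\tau)$ with $y_1=1$, which is allowed by hypothesis (1). The elements are distinct since $\tau\notin A$. In $\Dih(A)$ we have $a\cdot y=ay$, $a\cdot\tau=a\tau$, $(a\tau)\cdot y=ay^{-1}\tau$ and $(a\tau)\cdot\tau=a$. Hence
\[
W_{f,Y'}(a)=\bigl(h(ay_1),\ldots,h(ay_k),\,h(a)+1\bigr),
\]
\[
W_{f,Y'}(a\tau)=\bigl(h(ay_1^{-1})+1,\ldots,h(ay_k^{-1})+1,\,h(a)\bigr).
\]

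\emph{Reading off the coset.} Because $y_1=1$, the first coordinate of $W_{f,Y'}(a)$ is $h(a)$ and its last coordinate is $h(a)+1$. So the last coordinate minus the first is $+1$. For $W_{f,Y'}(a\tau)$ the first coordinate is $h(a)+1$ and the last is $h(a)$, so the difference is $-1$. Since $q$ is odd, $\F_q$ has odd characteristic and $1\neq-1$. Therefore no word from the coset $A$ can equal a word from the coset $A\tau$. This is the only step where oddness of $q$ enters, and it is the crux of the argument. In characteristic $2$ the shift by $1$ would be symmetric and this test would fail.

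\emph{Within each coset.} Suppose $W_{f,Y'}(a)=W_{f,Y'}(b)$. Then the first $k$ coordinates give $W_{h,Y}(a)=W_{h,Y}(b)$, so $a=b$ by hypothesis (2). Suppose instead $W_{f,Y'}(a\tau)=W_{f,Y'}(b\tau)$. Subtracting $1$ from the first $k$ coordinates gives $W_{h,Y^{-1}}(a)=W_{h,Y^{-1}}(b)$, so $a=b$ by hypothesis (3). Hence $(f,Y')$ is separating, and $\sep_q(\Dih(A))\leq|Y'|=|Y|+1$.
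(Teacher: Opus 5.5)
Your proof is correct and follows the paper's argument. You compute the two families of words exactly as the paper does, and you separate within each coset via hypotheses (2) and (3). You also rule out cross-coset coincidences by comparing the coordinates indexed by $1$ and $\tau$, which would force $1=-1$ (the paper's $2=0$), impossible in odd characteristic.
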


\begin{proof}
For $a\in A$,
\[
    W(a)=\bigl((h(ay))_{y\in Y},h(a)+1\bigr).
\]
For $a\tau\in A\tau$, using $\tau y=y^{-1}\tau$,
\[
    W(a\tau)=\bigl((h(ay^{-1})+1)_{y\in Y},h(a)\bigr).
\]
Assumptions~(2) and~(3) separate rotations and reflections within their own layers.  If a rotation word equaled a reflection word, the coordinates indexed by $1$ and by $\tau$ would give
\[
    h(a)=h(b)+1,
    \qquad
    h(a)+1=h(b),
\]
which implies $2=0$, impossible since $q$ is odd.
\end{proof}

\begin{theorem}[Finite-field generalized dihedral groups]
Let $\ell$ be an odd prime power and let $d\geq1$.  Then
\[
    \sep_\ell(\F_{\ell^d}^{\times}\rtimes C_2)=d+1,
\]
where $C_2$ acts by inversion.
\end{theorem}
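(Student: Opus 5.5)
The proof splits into a lower bound, which is pure counting, and an upper bound, which applies the Orientation doubling lemma to the finite-field cyclic benchmark.

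\emph{Lower bound.} The group $\F_{\ell^d}^{\times}\rtimes C_2$ has order $2(\ell^d-1)$. Since $\ell\geq3$, we get $2(\ell^d-1)\geq 2\ell^d-2>\ell^d$, because $\ell^d\geq3>2$. Hence by Proposition~\ref{prop:basic-bounds},
\[
\sep_\ell\geq\lceil\log_\ell 2(\ell^d-1)\rceil\geq d+1 .
\]

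\emph{Upper bound.} Put $A=\F_{\ell^d}^{\times}$ and let $\psi$ be a nonzero $\F_\ell$-linear functional on $K=\F_{\ell^d}$. Take $h=\psi|_A$, and let $Y=(1,y_2,\ldots,y_d)$ consist of nonzero elements forming an $\F_\ell$-basis of $K$ with $y_1=1$; for instance $(1,\alpha,\ldots,\alpha^{d-1})$ for a primitive element $\alpha$.
\begin{enumerate}[label=\textup{(\arabic*)}]
\item Condition (1) of the Orientation doubling lemma holds because $1\in Y$.
\item Condition (2) holds by the Basis observation lemma: $x\mapsto(\psi(xy_i))_i$ is injective on $K$, hence on $A$.
\end{enumerate}
Condition (3) requires that $Y^{-1}=(1,y_2^{-1},\ldots,y_d^{-1})$ also be an $\F_\ell$-basis of $K$. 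This is the only step needing care, because inverses of a basis need not form a basis.

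For $d=1$ it is trivial. In general I would look for a basis closed under inversion as a set, so that $Y^{-1}$ is a reordering of $Y$ and the word map stays injective. Two natural candidates:
\begin{itemize}
\item A normal basis does not obviously work.
\item A symmetric choice $(1,\beta,\beta^{-1},\beta^2,\beta^{-2},\ldots)$, i.e.\ $\beta^{j}$ for $j$ in a symmetric range of $d$ consecutive exponents multiplied through by a power of $\beta$.
\end{itemize}
The second candidate is the one to use. Take any $\beta$ with $K=\F_\ell(\beta)$; its minimal polynomial has degree $d$, so any $d$ consecutive powers $\beta^{c},\ldots,\beta^{c+d-1}$ form a basis, since they are $\beta^c$ times the power basis.

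\begin{itemize}
\item \emph{$d$ odd.} Take $Y=\{\beta^{j}:|j|\leq (d-1)/2\}$. This set contains $1$ and is closed under inversion, so $Y^{-1}$ is a permutation of $Y$. Separation by $Y^{-1}$ then follows from separation by $Y$, since words are merely reordered.
\item \emph{$d$ even.} Consecutive symmetric exponents fail, so instead use $Y^{-1}$ directly. The set $Y^{-1}=\{\beta^{-j}:0\leq j\leq d-1\}$ equals $\beta^{-(d-1)}\{\beta^{i}: 0\leq i\leq d-1\}$, which is again $d$ consecutive powers and hence a basis. So the plain power basis $Y=(1,\beta,\ldots,\beta^{d-1})$ works for every $d$: both $Y$ and $Y^{-1}$ are bases, and both separate $A$ via the Basis observation lemma.
\end{itemize}

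Applying the Orientation doubling lemma (with $q=\ell$ odd) then gives $\sep_\ell\leq d+1$, which matches the lower bound. I expect the only real obstacle to be verifying condition (3), and the "consecutive powers times a unit" observation resolves it.
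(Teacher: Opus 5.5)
Your proof is correct and follows the paper's argument: the counting lower bound from $\ell^d<2(\ell^d-1)$, plus Orientation doubling applied to the power basis $(1,\alpha,\ldots,\alpha^{d-1})$. The only difference is how you show the inverse powers form a basis: you write them as $\beta^{-(d-1)}$ times the power basis, whereas the paper notes that $\alpha^{-1}$ also has degree $d$; your symmetric odd-$d$ detour is unnecessary.
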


\begin{proof}
Let $K=\F_{\ell^d}$ and choose a primitive element $\alpha\in K^\times$.  Let $\psi:K\to\F_\ell$ be nonzero and set $h(x)=\psi(x)$.  The pattern
\[
    Y=(1,\alpha,\ldots,\alpha^{d-1})
\]
separates $K^\times$.  Since $\alpha^{-1}$ also has degree $d$ over
$\F_\ell$, the inverse powers
$1,\alpha^{-1},\ldots,\alpha^{-(d-1)}$ form a basis, so the inverse pattern
also separates.  Orientation doubling gives the upper bound $d+1$.  For the
lower bound,
\[
    |K^\times\rtimes C_2|=2(\ell^d-1),
\]
and for odd $\ell$,
\[
    \ell^d<2(\ell^d-1)<\ell^{d+1}.
\]
Thus the counting lower bound is $d+1$.
\end{proof}

This theorem is a group-level benchmark, not yet a connected-window bound;
the next subsection realizes it concretely and addresses the connectivity
question.

\subsection{Realization as carbon tori}

Let $\ell$ be an odd prime power, let $K=\F_{\ell^d}$, and choose a primitive
element $\alpha\in K^\times$.  Choose an integer $r$ and define
\[
    \phi:\Z^2\to K^\times,
    \qquad
    \phi(v_1)=\alpha,
    \quad
    \phi(v_2)=\alpha^r.
\]
Then $\phi$ is surjective, and with $N=\ker\phi$,
\[
    \Z^2/N\cong K^\times.
\]
The associated honeycomb Cayley group is
\[
    G_N=(\Z^2/N)\rtimes C_2\cong K^\times\rtimes C_2.
\]
The natural generating set becomes
\[
    S=\{\alpha\tau,\alpha^{r+1}\tau,\alpha^r\tau\}.
\]
When these three involutions are distinct and the quotient has no collapsed hexagonal faces, $\Cay(G_N,S)$ is a honeycomb toroidal Cayley graph.  The previous theorem gives
\[
    \sep_\ell(G_N)=d+1.
\]

The optimal pattern
\[
    (1,\alpha,\ldots,\alpha^{d-1},\tau)
\]
computes the group-level invariant, but it need not be connected in the honeycomb graph.  This is precisely why $\csep_q(G,S)$ is a separate graph-geometric refinement.

\subsection{Zig-zag cyclic windows in carbon tori}

In the honeycomb model the generators $s_1,s_2,s_3$ are involutions, so $\langle s_i\rangle$ has order $2$.  However products such as
\[
    t=s_1s_2
\]
lie in the translation subgroup $A_N$ and may have large order.  If $t$ has order $m$, the cyclic-coset construction applied to $H=\langle t\rangle$ gives a separating path window in the $t$-direction.  In the original cubic graph, one step by $t=s_1s_2$ is a two-edge zig-zag.

The intermediate vertices $t^js_1$ visited along this path give the following
precise graph bound.

\begin{proposition}[Zig-zag Cayley-window bound]
In a carbon-torus group $G_N$, let $S=(s_1,s_2,s_3)$ be the cubic honeycomb
generating tuple, put $t=s_1s_2$, let $m=\ord(t)$, let $2\leq k\leq m$, and
suppose
\[
    Y_k=\{1,t,\ldots,t^{k-1}\}
\]
is separating for some $q$-ary labelling.  Then
\[
    \widetilde Y_k=
    Y_k\cup\{t^js_1:0\leq j\leq k-2\}\cup S
\]
is a connected separating Cayley window.  In particular,
\[
    \csep_q(G_N,S)\leq 2k+1.
\]
\end{proposition}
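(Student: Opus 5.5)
The plan is to check three things in turn: that $\widetilde Y_k$ is a Cayley window in the sense of the definition, that it is still separating, and that it has the stated size.

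\textbf{Cayley window.} By construction $1\in Y_k\subseteq\widetilde Y_k$ and $S\subseteq\widetilde Y_k$. For connectivity I will exhibit an explicit walk in the induced subgraph of $\Cay(G_N,S)$. For $0\le j\le k-2$, the vertex $t^j$ is joined by an $s_1$-edge to $t^js_1$. Since $s_1,s_2$ are involutions, $t^js_1\cdot s_2=t^{j+1}$, so $t^js_1$ is joined by an $s_2$-edge to $t^{j+1}$. Hence the zig-zag
\[
1,\ s_1,\ t,\ ts_1,\ t^2,\ \ldots,\ t^{k-2}s_1,\ t^{k-1}
\]
is a walk whose consecutive vertices are adjacent, and all of them lie in $\widetilde Y_k$. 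Each element of $S$ is adjacent to $1$. So the induced subgraph is connected. The underlying generating set is inverse-closed because the generators are involutions, so no orientation issue arises.

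\textbf{Separation.} This is a monotonicity observation. Enlarging a pattern cannot destroy injectivity: the $\widetilde Y_k$-word at $g$ contains the $Y_k$-word at $g$ as a subword. I will fix the ordering of $\widetilde Y_k$ so that the coordinates of $Y_k$ come first. Then the projection to these coordinates recovers $W_{f,Y_k}(g)$, which is injective by hypothesis. One point needs attention: if some added element already lies in $Y_k$, it should be listed only once, so that $\widetilde Y_k$ is a tuple of distinct elements. This affects only the cardinality, not separation.

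\textbf{Size.} The count is
\[
|\widetilde Y_k|\le k+(k-1)+3=2k+2.
\]
To reach $2k+1$, I note that $s_1=t^0s_1$ already belongs to the zig-zag set $\{t^js_1\}$, which requires $k\ge2$. Hence adjoining $S$ contributes at most two new elements, $s_2$ and $s_3$, and
\[
|\widetilde Y_k|\le k+(k-1)+2=2k+1.
\]
Since $\csep_q(G_N,S)$ is a minimum over Cayley windows admitting a separating labelling, it follows that $\csep_q(G_N,S)\le 2k+1$.

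\textbf{Main obstacle.} The only delicate step is the bookkeeping around possible coincidences among the listed elements. Any such coincidence only lowers the count, so the inequality survives. The real content is noticing that $s_1$ is shared between the zig-zag and $S$, which gives $2k+1$ rather than $2k+2$.
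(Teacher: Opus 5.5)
Your proof is correct and follows the paper's argument essentially step for step. The $s_1$-edge from $t^j$ to $t^js_1$ followed by the $s_2$-edge to $t^{j+1}$ gives connectivity, containing $Y_k$ gives separation, and the count $k+(k-1)+2$ uses that $s_1=t^0s_1$ is already present. One small correction: $t^js_1s_2=t^{j+1}$ holds by the definition $t=s_1s_2$, not because $s_1,s_2$ are involutions; the involution property is only what makes the Cayley graph undirected.
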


\begin{proof}
For $0\leq j\leq k-2$, the two Cayley edges
\[
    t^j\;\mathord{-}\;t^js_1
    \;\mathord{-}\;t^js_1s_2=t^{j+1}
\]
join consecutive translation vertices.  These vertices therefore form a
zig-zag path containing $Y_k$.  Adding $S$ preserves connectedness because
each generator is adjacent to $1$.  The enlarged window remains separating
because it contains the separating subpattern $Y_k$.  It has at
most $k$ translation vertices, $k-1$ intermediate vertices, and the two
generators other than the already present $s_1$, for a total of at most
$2k+1$.
\end{proof}

Combined with the cyclic-coset criteria of Section~\ref{sec:cyclic}, this proposition gives
explicit connected windows whenever the translation direction supports a
suitable cycle packing.  It is the direct graph-theoretic analogue of
encoding several parallel cycles by De Bruijn words.

\subsection{The smallest honeycomb torus: the Heawood quotient}

The zig-zag bound gives connected windows but does not measure their excess
over the unrestricted baseline.  The comparison can already be made exactly
on the smallest nondegenerate quotient of the honeycomb lattice.

\begin{computationalproposition}[The Heawood carbon-torus gap]
\label{prop:smallest-torus-gap}
Let $A=C_7$, written additively, and put
\[
  G_H=\Dih(C_7)=C_7\rtimes C_2,
  \qquad |G_H|=14.
\]
Let $v_1=1$, $v_2=3$ in $C_7$ and take the honeycomb generating triple
\[
  S_H=\{(1,1),(4,1),(3,1)\}.
\]
Then $\Cay(G_H,S_H)$ is the Heawood graph: it is connected, bipartite, and
trivalent of girth $6$.  It is the smallest nondegenerate honeycomb quotient
arising from a finite-index sublattice of $\Z^2$.  Moreover,
\[
  \sep_4(G_H)=2,
  \qquad
  \csep_4(G_H,S_H)=4.
\]
In particular $\csep_4(G_H,S_H)=2\,\sep_4(G_H)$.
\end{computationalproposition}

\begin{proof}
The quotient map
\[
  \Z^2\longrightarrow C_7,
  \qquad (x,y)\longmapsto x+3y
\]
has a finite-index, non-diagonal kernel and sends the three honeycomb
directions to $D=\{1,4,3\}$.  The six ordered differences $d-d'$ with
$d,d'\in D$ and $d\ne d'$ are precisely the six nonzero elements of $C_7$.
Thus the three generators are distinct and the bipartite Cayley graph has no
$4$-cycle.  It is connected because these differences generate $C_7$, and it
has a $6$-cycle, for example
\[
 (0,0),(1,1),(4,0),(5,1),(1,0),(4,1),(0,0).
\]
Hence its girth is $6$.  Conversely, in any nondegenerate cubic honeycomb
quotient the six ordered differences must be distinct and nonzero; therefore
$|A|\geq7$ and the full generalized-dihedral graph has at least $14$
vertices.  This proves minimality.

The counting and structural bounds give $\sep_4(G_H)\geq2$ and
$\csep_4(G_H,S_H)\geq4$.  Matching labellings for a two-element unrestricted
window and for the one-step window prove the reverse inequalities.  The
accompanying supplement gives and verifies both labellings.
\end{proof}

\begin{remark}
The three shifts $\{1,3,4\}$ form a cyclic $(7,3,1)$ difference set.  The two
bipartite layers are accordingly the points and lines of the projective
plane of order $2$, the Fano plane, which identifies the graph as the
Heawood graph.  The proposition shows that the cyclic coincidence
$\sep_q=\csep_q$ fails already at the first possible honeycomb quotient.
\end{remark}

\section{Graph-theoretic refinements and open problems}
\label{sec:questions}

The main unresolved problems concern the geometry imposed by the Cayley
generators rather than the existence of unrestricted separating patterns.

\begin{question}[Carbon-torus windows]
For honeycomb toroidal Cayley graphs
$G_N=(\Z^2/N)\rtimes C_2$, determine or bound
$\csep_q(G_N,S_N)$ and $\chi_1(G_N,S_N)$.  The Heawood example has
$\csep_4=2\sep_4$; does this ratio remain bounded as the torus grows?  For the
one-step window, when is the counting bound
\[
    \chi_1(G_N,S_N)\geq \left\lceil |G_N|^{1/4}\right\rceil
\]
sharp, and how does the threshold alphabet depend on $N$?
\end{question}

\begin{question}[Spherical windows and matrix coefficients]
For which cubic planar Cayley graphs is the counting lower bound for
$\chi_1(G,S)$ sharp?  Can representation theory predict which modules contain
a separating matrix coefficient for a prescribed one-step window?  In
particular, explain conceptually why the degree-six action used for $A_5$
succeeds and why the degree-eight projective action is effective for
$\operatorname{PSL}_2(\F_7)$.
\end{question}

\begin{question}[Cyclic directions]
For the cyclic group $C_N$, determine $\sep_q(C_N)$ and characterize when the
cyclic-coset finite-field construction is sharp.  Translate such results into
exact connected-window bounds when the cycle occurs as a direction in a
Cayley graph.
\end{question}

\section*{Computational certificates}
The exact certificates and reproduction instructions for the computational
propositions are available in the
\href{https://github.com/kmsming-prog/information-and-locality-in-cayley-graphs}{\underline{companion repository}};
see \nolinkurl{README.md}.

\section*{Acknowledgements}
The work of Ming-Hsuan Kang was supported by the National Science and
Technology Council of Taiwan under grant NSTC 115-2115-M-A49-010.

\subsection*{Use of generative AI and AI-assisted technologies in
the manuscript preparation process}
During the preparation of this work, the authors used OpenAI ChatGPT
and Codex for literature organization, language editing, and
software-assisted verification.  After using these tools, the authors
reviewed and edited the content as needed and take full responsibility
for the content of the publication.

\end{document}